\newcommand{\RED} {\hbox{\color{red} RED}}
\newcommand{\BLUE} {\hbox{\color{blue} BLUE}}
\newcommand{\BLACK} {\hbox{\color{black} BLACK}}
\newcommand{\COL}{\text{COL}}
\newcommand{\N}{\sf N}
\theoremstyle{plain}
\newtheorem{theorem}{Theorem}[section]
\theoremstyle{definition}
\newtheorem{definition}[theorem]{Definition}
\theoremstyle{plain}
\newtheorem{notation}[theorem]{Notation}
\theoremstyle{plain}
\newtheorem{lemma}[theorem]{Lemma}
\theoremstyle{plain}
\newtheorem{proposition}[theorem]{Proposition}
\title{The Induced Bipartite Ramsey Theorem:\\ An Exposition} 
\author{William Gasarch and Gary Peng}
\date{}
\begin{document}

\maketitle

\section{Introduction} 

This paper presents a proof of the Induced Bipartite Ramsey Theorem. Although based on the treatment in Diestel~\cite{Diestel-2017}, our treatment differs from his in the following ways: 
\begin{enumerate}
    \item 
    We provide more detail.
    \item 
    We provide examples on small graphs. 
    \item 
    We include the Ramsey Theorem for general (i.e., not necessarily induced) bipartite graphs for contrast. 
    \item 
     We view the Induced Bipartite Ramsey Theorem as a theorem of interest in and of itself. (Diestel presents the Induced Bipartite Ramsey Theorem as a lemma for the Induced Ramsey Theorem for general graphs.)
    \item 
    We do not prove the Induced Ramsey Theorem for general graphs. 
\end{enumerate}

To summarize, our goal is pedagogical.\\

In both our treatment and Diestel's, we only deal with the case of 2 colors.
Extending the results to $c$ colors is a straightforward but tedious exercise. 

\section{Bipartite Graphs}

\subsection{Preliminaries}
\begin{notation}
    Let $n \in \N$. We denote the set $\{1, \ldots, n\}$ by $[n].$
\end{notation}
\begin{notation}
Let $X$ be a set and $k \in \N$. We denote the set of $k$-subsets of $X$ by $\displaystyle\binom{X}{k}$.
\end{notation}
\begin{definition}
    A {\it bipartite graph} is a graph $G=(V,E)$ where (1) $V$ can be partitioned into disjoint sets $A,B$ and (2)  $E\subseteq A\times B$. We can think of $A$ as being on the left side of $G$ and $B$ as being on the right side of $G$. Figure~\ref{fig:bip} shows a bipartite graph. 
    \begin{figure}[!h]
        \centering
        \begin{tikzpicture}
            \tikzstyle{vertex} = [circle, draw]

            \draw node (A) at (0, 0.75) {A};
            \draw node[vertex] (1) at (0, 0) {1};
            \draw node[vertex] (2) at (0, -1.25) {2};
            \draw node[vertex] (3) at (0, -2.5) {3};

            \draw node (B) at (2, 0.75) {B};
            \draw node[vertex] (4) at (2, 0) {4};
            \draw node[vertex] (5) at (2, -1.25) {5};
            \draw node[vertex] (6) at (2, -2.5) {6};

            \draw[-] (1) -- (4);
            \draw[-] (1) -- (5);
            \draw[-] (4) -- (2);
            \draw[-] (2) -- (5);
            \draw[-] (2) -- (6);
            \draw[-] (5) -- (3);
            \draw[-] (3) -- (6);
        \end{tikzpicture}
        \caption{A Bipartite Graph}
        \label{fig:bip}
    \end{figure}
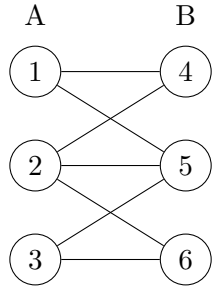
\end{definition}
\begin{definition}
    Let $G = (A, B, E)$ be a bipartite graph. We say that $G$ is \textit{complete} if $E = A \times B,$ and in this case, we denote $G$ by $K_{|A|, |B|}.$ Figure~\ref{fig:k33} shows $K_{3,3}$. 

    \begin{figure}[!h]
        \centering
        \begin{tikzpicture}
            \tikzstyle{vertex} = [circle, draw]

            \draw node[vertex] (1) at (0, 0) {1};
            \draw node[vertex] (2) at (0, -1.25) {2};
            \draw node[vertex] (3) at (0, -2.5) {3};

            \draw node[vertex] (4) at (2, 0) {4};
            \draw node[vertex] (5) at (2, -1.25) {5};
            \draw node[vertex] (6) at (2, -2.5) {6};

            \draw[-] (1) -- (4);
            \draw[-] (1) -- (5);
            \draw[-] (1) -- (6);
            \draw[-] (2) -- (4);
            \draw[-] (2) -- (5);
            \draw[-] (2) -- (6);
            \draw[-] (3) -- (4);
            \draw[-] (3) -- (5);
            \draw[-] (3) -- (6);
        \end{tikzpicture}
        \caption{$K_{3, 3}$}
        \label{fig:k33}
    \end{figure}
\end{definition}
\begin{notation}
    Let $n \in \N$ and $k \in \N$ with $k \le n.$ We denote the bipartite graph 
    $$G = \left([n], \binom{[n]}{k}, E\right),$$
    where 
    $$E = \left\{(x, X) \in [n] \times \binom{[n]}{k}  \colon x \in X\right\},$$ 
    by $B_{n, k}.$ Figure~\ref{fig:B42} shows $B_{4,2}$. 

    \begin{figure}[!h]
        \centering
        \begin{tikzpicture}
            \tikzstyle{vertex1} = [circle, draw]
            \tikzstyle{vertex2} = [circle, draw, scale=0.75]

            \draw node[vertex1] (1) at (0, 0) {1};
            \draw node[vertex1] (2) at (0, -1) {2};
            \draw node[vertex1] (3) at (0, -2) {3};
            \draw node[vertex1] (4) at (0, -3) {4};

            \draw node[vertex2] (5) at (2, 1) {1, 2};
            \draw node[vertex2] (6) at (2, 0) {1, 3};
            \draw node[vertex2] (7) at (2, -1) {1, 4};
            \draw node[vertex2] (8) at (2, -2) {2, 3};
            \draw node[vertex2] (9) at (2, -3) {2, 4};
            \draw node[vertex2] (10) at (2, -4) {3, 4};

            \draw[-] (1) -- (5);
            \draw[-] (1) -- (6);
            \draw[-] (1) -- (7);
            \draw[-] (2) -- (5);
            \draw[-] (2) -- (8);
            \draw[-] (2) -- (9);
            \draw[-] (3) -- (6);
            \draw[-] (3) -- (8);
            \draw[-] (3) -- (10);
            \draw[-] (4) -- (7);
            \draw[-] (4) -- (9);
            \draw[-] (4) -- (10);
        \end{tikzpicture}
        \caption{$B_{4, 2}$}
        \label{fig:B42}
    \end{figure}
\end{notation}

We now look at two different ways a graph can be a subgraph of another graph.

\begin{notation}
    Let $G = (V, E)$ be a graph and $H=(V',E')$ be a graph with $V'\subseteq V.$
    \begin{enumerate}
        \item 
        We say that $H$ is a {\it subgraph} of $G$ if $\displaystyle E' \subseteq \binom{V'}{2} \cap E$. 
        \item 
        We say that $H$ is an {\it induced subgraph} of $G$ if $\displaystyle E'= \binom{V'}{2} \cap E$.
        \end{enumerate} 
        Figures~\ref{fig:notinduced1} and \ref{fig:notinduced2} show subgraphs that are not induced subgraphs. Figures~\ref{fig:induced1} and \ref{fig:induced2} show subgraphs that are induced subgraphs. 
\end{notation}
\begin{figure}[!h]
    \centering
    \begin{tikzpicture}
        \tikzstyle{vertex} = [circle, draw]

        \draw node[vertex, red] (1) at (0, 0) {1};
        \draw node[vertex, red] (2) at (2, 0) {2};
        \draw node[vertex] (3) at (0, -2) {3};
        \draw node[vertex, red] (4) at (2, -2) {4};

        \draw[-, red] (1) -- (2);
        \draw[-] (1) -- (3);
        \draw[-, red] (1) -- (4);
        \draw[-] (2) -- (4);
        \draw[-] (3) -- (4);
    \end{tikzpicture}
    \caption{The red subgraph is not an induced subgraph.}
    \label{fig:notinduced1}
\end{figure}
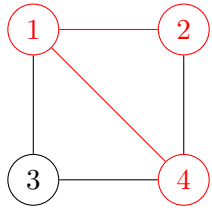
\begin{figure}[!h]
    \centering
    \begin{tikzpicture}
        \tikzstyle{vertex} = [circle, draw]

        \draw node[vertex, blue] (1) at (0, 0) {1};
        \draw node[vertex, blue] (2) at (2, 0) {2};
        \draw node[vertex] (3) at (0, -2) {3};
        \draw node[vertex, blue] (4) at (2, -2) {4};

        \draw[-, blue] (1) -- (2);
        \draw[-] (1) -- (3);
        \draw[-, blue] (1) -- (4);
        \draw[-, blue] (2) -- (4);
        \draw[-] (3) -- (4);
    \end{tikzpicture}
    \caption{The blue subgraph is an induced subgraph.}
    \label{fig:induced1} 
\end{figure}
\begin{figure}[!h]
    \centering
    \begin{tikzpicture}
        \tikzstyle{vertex} = [circle, draw]

        \draw node[vertex, red] (1) at (0, 0) {1};
        \draw node[vertex, red] (2) at (0, -2) {2};
        \draw node[vertex, red] (3) at (0, -4) {3};
        \draw node[vertex, red] (4) at (2, 0) {4};
        \draw node[vertex, red] (5) at (2, -2) {5};
        \draw node[vertex] (6) at (2, -4) {6};
        
        \draw[-, red] (1) -- (4);
        \draw[-] (1) -- (5);
        \draw[-] (1) -- (6);
        \draw[-, red] (2) -- (5);
        \draw[-] (3) -- (4);
        \draw[-, red] (3) -- (5);
        \draw[-] (3) -- (6);
    \end{tikzpicture}
    \caption{The red subgraph is not an induced subgraph.}
    \label{fig:notinduced2}
\end{figure}
\begin{figure}[!h]
    \centering
    \begin{tikzpicture}
        \tikzstyle{vertex} = [circle, draw]

        \draw node[vertex, blue] (1) at (0, 0) {1};
        \draw node[vertex, blue] (2) at (0, -2) {2};
        \draw node[vertex, blue] (3) at (0, -4) {3};
        \draw node[vertex, blue] (4) at (2, 0) {4};
        \draw node[vertex, blue] (5) at (2, -2) {5};
        \draw node[vertex] (6) at (2, -4) {6};
        
        \draw[-, blue] (1) -- (4);
        \draw[-, blue] (1) -- (5);
        \draw[-] (1) -- (6);
        \draw[-, blue] (2) -- (5);
        \draw[-, blue] (3) -- (4);
        \draw[-, blue] (3) -- (5);
        \draw[-] (3) -- (6);
    \end{tikzpicture}
    \caption{The blue subgraph is an induced subgraph.}
    \label{fig:induced2}
\end{figure}
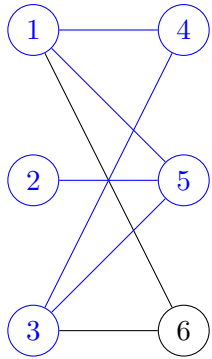

\section{Hypergraphs}

We will need the following concepts:

\begin{definition}
Let $a\ge 1$. An {\it $a$-ary hypergraph} is a pair $(V,E)$ where $V$ is a set and $E\subseteq \binom{V}{a}$. Note that a graph is a 2-ary hypergraph.
\end{definition}

\begin{definition}
    Let $G = (V, E)$ be an $a$-ary hypergraph and $\COL: E \rightarrow [c]$ be a $c$-coloring of $E.$ We say that $H \subseteq V$ is a \textit{homogeneous set} if $\COL$ restricted to $\displaystyle \binom{H}{a} \cap E$ is constant.
\end{definition}

We now state the $a$-ary Ramsey Theorem:

\begin{theorem}\label{th:aary} 
Let $a,c,k\in\N$. Then there exists $n$ such that, 
for all $c$-colorings of $\binom{[n]}{a}$, there exists a homogenous set of size $k$.
\end{theorem}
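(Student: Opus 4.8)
The plan is to prove Theorem~\ref{th:aary} by induction on the arity $a$, using the pigeonhole principle for the base case and the Erd\H{o}s--Rado ``peeling'' construction for the inductive step. For fixed $a$ and $c$, let me write $R(a,c,k)$ for some integer $n$ that fulfills the conclusion of the theorem with those parameters; each stage of the argument will produce an explicit (though enormous) such value. For $a=1$ a $c$-coloring of $\binom{[n]}{1}$ just assigns a color to each of the $n$ singletons, so taking $n=c(k-1)+1$ forces, by pigeonhole, a color class of size at least $k$, which is a homogeneous set; thus $R(1,c,k)=c(k-1)+1$ works.

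For the inductive step I would assume the theorem at arity $a-1$ and first pin down the parameters: set $m=c(k-1)+1$, define target sizes by the downward recursion $n_m=1$ and $n_{i-1}=R(a-1,c,n_i)+1$ for $i=m,m-1,\dots,1$, and claim that $n:=n_0$ works at arity $a$. Given a $c$-coloring $\COL$ of $\binom{[n]}{a}$, I would build by recursion on $i=1,\dots,m$ a decreasing chain $[n]=S_0\supseteq S_1\supseteq\cdots\supseteq S_m$ with $|S_i|\ge n_i$, together with vertices $x_i$ and colors $\gamma_i\in[c]$, as follows: having built $S_{i-1}$, pick any $x_i\in S_{i-1}$, and consider the $c$-coloring of $\binom{S_{i-1}\setminus\{x_i\}}{a-1}$ sending each $(a-1)$-set $T$ to $\COL(\{x_i\}\cup T)$; since $|S_{i-1}\setminus\{x_i\}|\ge n_{i-1}-1=R(a-1,c,n_i)$, the inductive hypothesis supplies a homogeneous subset $S_i\subseteq S_{i-1}\setminus\{x_i\}$ of size at least $n_i$, and I let $\gamma_i$ be the color it witnesses, so that $\COL(\{x_i\}\cup T)=\gamma_i$ for every $T\in\binom{S_i}{a-1}$.

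The payoff is the observation that the color of an $a$-subset of $\{x_1,\dots,x_m\}$ depends only on the index of its least element: if $i_1<i_2<\cdots<i_a$, then each $x_{i_t}$ with $t\ge 2$ satisfies $x_{i_t}\in S_{i_t-1}\subseteq S_{i_1}$ (as $i_t-1\ge i_1$), so $\{x_{i_2},\dots,x_{i_a}\}\in\binom{S_{i_1}}{a-1}$ and hence $\COL(\{x_{i_1},x_{i_2},\dots,x_{i_a}\})=\gamma_{i_1}$. Applying pigeonhole to the sequence $\gamma_1,\dots,\gamma_m$, which has $m=c(k-1)+1$ terms, some color $\gamma$ is taken at $k$ indices $j_1<\cdots<j_k$, and then $H=\{x_{j_1},\dots,x_{j_k}\}$ is a set of size $k$ all of whose $a$-subsets have color $\gamma$, i.e., a homogeneous set of size $k$. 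This closes the induction.

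The only part that is not bare pigeonhole is the peeling construction, and the step I expect to need the most care is the bookkeeping around it: arranging the downward recursion for the sizes $n_i$ so that the inductive hypothesis is legitimately applicable at every one of the $m$ rounds while still leaving $m$ (hence plenty of) vertices $x_i$ at the end, and carefully checking the nesting $\{x_{i_2},\dots,x_{i_a}\}\subseteq S_{i_1}$ on which the ``color depends only on the least index'' claim rests. Everything beyond that is routine.
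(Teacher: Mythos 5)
Your proof is correct, but note that the paper does not prove Theorem~\ref{th:aary} at all: it is stated as a known result (the classical $a$-ary Ramsey Theorem) and only used, via Notation~\ref{no:Rack}, to define $R_{a,c}(k)$. So your argument supplies a proof where the paper has none, and it is the standard one: induction on the arity $a$, with pigeonhole at $a=1$ and the Erd\H{o}s--Rado peeling construction for the step. The bookkeeping you flagged as delicate does check out: since $n_{i-1}=R(a-1,c,n_i)+1$ and $|S_{i-1}|\ge n_{i-1}$, the set $S_{i-1}\setminus\{x_i\}$ is large enough to invoke the inductive hypothesis at every one of the $m=c(k-1)+1$ rounds, and the nesting $x_{i_t}\in S_{i_t-1}\subseteq S_{i_1}$ for $i_t>i_1$ is exactly right. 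Two cosmetic points you should tighten in a final write-up: first, say explicitly that the $x_i$ are pairwise distinct (this follows because $S_i\subseteq S_{i-1}\setminus\{x_i\}$, so $x_i$ is excluded from all later $S_j$ and hence from all later choices $x_j$); second, your phrase ``the color depends only on the index of its least element'' should read ``least index,'' since the vertices $x_1,\ldots,x_m$ need not be numerically increasing --- your actual argument is indexed by $i_1<\cdots<i_a$ and is correct as written, only the slogan is off. With those touches the proof is complete and could stand as an appendix to the paper's unproved Theorem~\ref{th:aary}.
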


\begin{notation}\label{no:Rack}
    Let $a, c, k \in \N.$ We denote by $R_{a, c}(k)$ the minimum $n$ that is guaranteed to exist by Theorem~\ref{th:aary}. 
\end{notation}

\subsection{The Bipartite Ramsey Theorem} 

The following theorem is folklore:

\begin{theorem}\label{th:BipRamThm} 
   For all $a,b \in \N$, there exists $n, k \in \N$ such that for all $2$-colorings of the edges of $K_{n, k},$ there exists a monochromatic $K_{a, b}.$ In particular, we can take $k=2b$ and 
   $n=a2^{2b}$.
\end{theorem}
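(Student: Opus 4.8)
The plan is to use a pigeonhole argument on the ``attachment patterns'' of the left vertices. Fix the complete bipartite graph $K_{n,k}$ with left side $[n]$ and right side $[k]$, where we set $k = 2b$ and $n = a2^{2b}$. Suppose we are given a $2$-coloring $\COL$ of the edges. For each left vertex $x \in [n]$, record its coloring pattern: the function from $[k]$ to $\{\RED, \BLUE\}$ given by $j \mapsto \COL(x,j)$. There are exactly $2^k = 2^{2b}$ possible patterns.

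First I would apply the pigeonhole principle: since $n = a \cdot 2^{2b}$ left vertices are each assigned one of $2^{2b}$ patterns, some pattern is shared by at least $a$ left vertices. Fix such a pattern $p \colon [k] \to \{\RED,\BLUE\}$ and let $A' \subseteq [n]$ be a set of exactly $a$ left vertices all realizing pattern $p$. Next I would analyze the pattern $p$ itself on the right side $[k]$ of size $2b$: by pigeonhole again, either at least $b$ coordinates of $p$ are $\RED$ or at least $b$ coordinates are $\BLUE$. Pick the majority color, say $\RED$ (the other case is symmetric), and let $B' \subseteq [k]$ be a set of exactly $b$ coordinates on which $p$ takes the value $\RED$.

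Then I would verify that $A'$ and $B'$ induce the desired monochromatic $K_{a,b}$: every $x \in A'$ has pattern $p$, and for every $j \in B'$ we have $p(j) = \RED$, so $\COL(x,j) = \RED$ for all $(x,j) \in A' \times B'$. Since $|A'| = a$ and $|B'| = b$, the edges between $A'$ and $B'$ form a monochromatic $K_{a,b}$, as required. (Note this is an ordinary, not induced, copy, which is all the theorem claims; in $K_{n,k}$ the subgraph on $A' \times B'$ is in fact complete anyway.)

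There is no real obstacle here — the argument is two nested applications of pigeonhole — but the one point deserving care is getting the quantifiers and the size bookkeeping exactly right: we need $n$ large enough that \emph{some} pattern class has size $\ge a$, which forces $n \ge a2^{k}$, and we need $k$ large enough that the majority color within a single pattern covers $\ge b$ coordinates, which forces $k \ge 2b$. Substituting $k = 2b$ into $a2^k$ gives the stated bound $n = a2^{2b}$, and I would double-check that the ``at least'' inequalities in both pigeonhole steps are not off by one (e.g.\ $\lceil a2^{2b}/2^{2b}\rceil = a$ and $\lceil 2b/2 \rceil = b$, both exact).
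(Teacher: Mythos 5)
Your proposal is correct and is essentially the same argument as the paper's: both encode each left vertex by its color pattern on the right side, pigeonhole to get $a$ left vertices sharing a pattern, then pigeonhole within that pattern to get $b$ coordinates of a majority color, with the same choices $k=2b$ and $n=a2^{2b}$. No gaps; the bookkeeping you flag works out exactly as you suspect.
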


\begin{proof}  
    Let $\COL: [n] \times [k] \rightarrow [2]$ be a $2$-coloring of the edges of 
    $K_{n, k},$ where $n, k \in \N$ are constants to be determined later, and define $\COL': [n] \rightarrow [2^{k}]$ as $$\COL'(x) = (\COL(x, 1), \ldots, \COL(x, k)).$$ Then, since there are $n$ elements in the domain and $2^{k}$ elements in the co-domain of $\COL'$, some color $\vec c = (c_1, ..., c_{k})$ is mapped to by at least $\frac{n}{2^{k}}$ left-vertices in $K_{n, k}$. (We will later pick $n,k$ such that $2^{k}$ divides $n$.) Thus, since $c_i \in \{\RED,\BLUE\}$, there exists $c^*\in \{\RED,\BLUE\}$ that appears at least $\frac{k}{2}$ times in $\vec c.$ (We will later pick $k$ such that 2 divides $k$.) Hence, $\vec c$ looks something like $$\vec c = (\cdots c^* \cdots c^* \cdots c^* \cdots \cdots\cdots c^* \cdots).$$
    Now, suppose that the $c^*$ occur at positions $p_1,\ldots,p_{\frac{k}{2}}$. Then, the $\frac{n}{2^{k}}$ left-vertices that map to $\vec c$ and the right-vertices $p_1,\ldots,p_{\frac{k}{2}}$ induce a monochromatic complete bipartite graph.\\
    
To finish the proof, we will now pick $n,k$. We need the following:
\begin{enumerate}
    \item 
    $\frac{n}{2^{k}}\ge a$ and $2^k | n$
    \item 
    $\frac{k}{2}\ge b$ and $2 | k$
\end{enumerate}
Therefore, we can take $k=2b$ and $n=a2^{k}=a2^{2b}$.
   \end{proof}

\section{The Induced $B_{4,2}$ Ramsey Theorem}

By Theorem~\ref{th:BipRamThm}, for all $a \in \N$, there exists 
$n,k \in \N$ such that for all 2-colorings of the edges of $K_{n,k}$, there exists a monochromatic $K_{a,a}$. Note that this is also an induced monochromatic $K_{a,a}$.

What if we wanted a monochromatic $B_{4,2}$? Since $B_{4,2}$ is a subgraph of $K_{4,6}$, we can just force a monochromatic $K_{4,6}$ and take the monochromatic $B_{4,2}$ inside it. 

What if we wanted the monochromatic $B_{4,2}$ to be \textit{induced}? (See Figure~\ref{fig:want} for the difference between a monochromatic $K_{4,6}$ and a monochromatic $B_{4,2}$). In this case, \textit{no complete bipartite graph  works} since $B_{4,2}$ cannot be an induced subgraph of any complete bipartite graph. So, what can we do? As it turns out, we can use a bipartite graph that is not complete. In fact, we will use a $B_{n,k}$. 

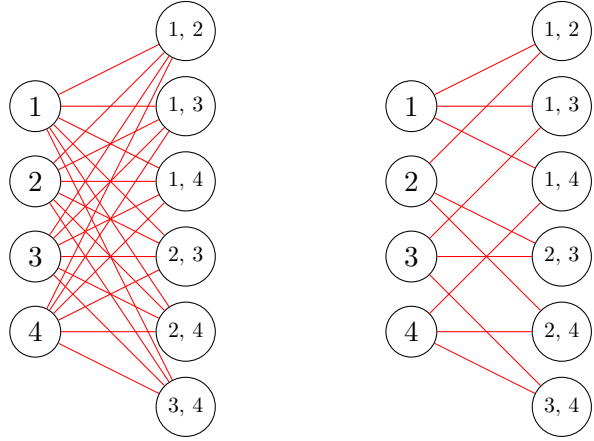
\begin{figure}[!h]
        \centering
        \begin{tikzpicture}
            \tikzstyle{vertex1} = [circle, draw]
            \tikzstyle{vertex2} = [circle, draw, scale=0.75]

            \draw node[vertex1] (21) at (-3.5, -1) {1};
            \draw node[vertex1] (22) at (-3.5, -2) {2};
            \draw node[vertex1] (23) at (-3.5, -3) {3};
            \draw node[vertex1] (24) at (-3.5, -4) {4};

            \draw node[vertex2] (25) at (-1.5, 0) {1, 2};
            \draw node[vertex2] (26) at (-1.5, -1) {1, 3};
            \draw node[vertex2] (27) at (-1.5, -2) {1, 4};
            \draw node[vertex2] (28) at (-1.5, -3) {2, 3};
            \draw node[vertex2] (29) at (-1.5, -4) {2, 4};
            \draw node[vertex2] (30) at (-1.5, -5) {3, 4};

            \draw[-, red] (21) -- (25);
            \draw[-, red] (21) -- (26);
            \draw[-, red] (21) -- (27);
            \draw[-, red] (21) -- (28);
            \draw[-, red] (21) -- (29);
            \draw[-, red] (21) -- (30);
            \draw[-, red] (22) -- (25);
            \draw[-, red] (22) -- (26);
            \draw[-, red] (22) -- (27);
            \draw[-, red] (22) -- (28);
            \draw[-, red] (22) -- (29);
            \draw[-, red] (22) -- (30);
            \draw[-, red] (23) -- (25);
            \draw[-, red] (23) -- (26);
            \draw[-, red] (23) -- (27);
            \draw[-, red] (23) -- (28);
            \draw[-, red] (23) -- (29);
            \draw[-, red] (23) -- (30);
            \draw[-, red] (24) -- (25);
            \draw[-, red] (24) -- (26);
            \draw[-, red] (24) -- (27);
            \draw[-, red] (24) -- (28);
            \draw[-, red] (24) -- (29);
            \draw[-, red] (24) -- (30);

            \draw node[vertex1] (1) at (1.5, -1) {1};
            \draw node[vertex1] (2) at (1.5, -2) {2};
            \draw node[vertex1] (3) at (1.5, -3) {3};
            \draw node[vertex1] (4) at (1.5, -4) {4};

            \draw node[vertex2] (5) at (3.5, 0) {1, 2};
            \draw node[vertex2] (6) at (3.5, -1) {1, 3};
            \draw node[vertex2] (7) at (3.5, -2) {1, 4};
            \draw node[vertex2] (8) at (3.5, -3) {2, 3};
            \draw node[vertex2] (9) at (3.5, -4) {2, 4};
            \draw node[vertex2] (10) at (3.5, -5) {3, 4};

            \draw[-, red] (1) -- (5);
            \draw[-, red] (1) -- (6);
            \draw[-, red] (1) -- (7);
            \draw[-, red] (2) -- (5);
            \draw[-, red] (2) -- (8);
            \draw[-, red] (2) -- (9);
            \draw[-, red] (3) -- (6);
            \draw[-, red] (3) -- (8);
            \draw[-, red] (3) -- (10);
            \draw[-, red] (4) -- (7);
            \draw[-, red] (4) -- (9);
            \draw[-, red] (4) -- (10);
        \end{tikzpicture}
        \caption{a monochromatic $K_{4,6}$ (left) and a monochromatic $B_{4,2}$ (right)}
        \label{fig:want}
    \end{figure}

\begin{theorem}\label{th:IndB42} 
There exists $n,k\in\N$ such that for all 2-colorings of the edges of 
$B_{n,k}$, there exists an induced monochromatic $B_{4, 2}.$
\end{theorem}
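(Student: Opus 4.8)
The plan is to use the hypergraph Ramsey theorem (Theorem~\ref{th:aary}) to pass to a highly structured subset $H$ of the left vertex class $[n]$, on which the colour of an edge depends only on the \emph{rank} of its left endpoint within the incident $k$-set. Fix $k$ (to be pinned down at the end), let $\COL\colon E(B_{n,k})\to[2]$ be given, and define $\Psi\colon\binom{[n]}{k}\to[2]^k$ by $\Psi(X)=\bigl(\COL(x_1,X),\COL(x_2,X),\dots,\COL(x_k,X)\bigr)$ where $X=\{x_1<x_2<\cdots<x_k\}$. Choosing $n=R_{k,2^k}(M)$ for a suitably large $M$ (Notation~\ref{no:Rack}), Theorem~\ref{th:aary} produces a homogeneous set $H\subseteq[n]$ of size $M$: there is a fixed vector $\vec c=(c_1,\dots,c_k)\in[2]^k$ such that $\COL(x,X)=c_r$ whenever $X\in\binom{H}{k}$ and $x$ is the $r$-th smallest element of $X$.

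Next I would build an induced monochromatic $B_{4,2}$ using only vertices associated to $H$. By pigeonhole some colour, say $\RED$, appears among the coordinates of $\vec c$ at two positions $p_1<p_2$ (this is exactly why we will need $k\ge 3$). Choose $a_1<a_2<a_3<a_4$ in $H$ with plenty of room between and around them, and for each pair $\{i,j\}$ with $i<j$ set $Y_{ij}=\{a_i,a_j\}\cup F_{ij}$, where $F_{ij}\subseteq H\setminus\{a_1,a_2,a_3,a_4\}$ is a $(k-2)$-set chosen so that, inside $Y_{ij}$, the element $a_i$ has rank $p_1$ and $a_j$ has rank $p_2$; explicitly, $F_{ij}$ consists of $p_1-1$ elements of $H$ below $a_i$, then $p_2-p_1-1$ elements strictly between $a_i$ and $a_j$, then $k-p_2$ elements above $a_j$ (these counts are nonnegative and sum to $k-2$). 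Since $F_{ij}$ avoids $a_1,\dots,a_4$ we get $Y_{ij}\cap\{a_1,a_2,a_3,a_4\}=\{a_i,a_j\}$, so the $Y_{ij}$ are distinct and the subgraph of $B_{n,k}$ induced on $\{a_1,a_2,a_3,a_4\}\cup\{Y_{ij}:i<j\}$ is isomorphic to $B_{4,2}$. Each of its $12$ edges is of the form $\COL(a_i,Y_{ij})=c_{p_1}$ or $\COL(a_j,Y_{ij})=c_{p_2}$, hence $\RED$; this is the desired induced monochromatic $B_{4,2}$.

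To finish I would fix the parameters: one checks that taking $|H|=M$ a modest multiple of $k$ leaves enough room below $a_1$, between consecutive $a_i$'s, and above $a_4$ to place all six padding sets $F_{ij}$, and that $k=3$ is the smallest admissible value (so $\vec c\in[2]^3$ has a repeated coordinate); then $n=R_{3,8}(M)$ works. Exact constants can be extracted from the bounds in Theorem~\ref{th:aary} but are not the point.

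The step I expect to be the real obstacle — and the reason the naive approach fails — is forcing all $12$ edges of the copy of $B_{4,2}$ to be monochromatic \emph{simultaneously}: a straightforward product-colouring/pigeonhole argument in the style of Theorem~\ref{th:BipRamThm} controls edges only "one left-vertex at a time" and cannot coordinate across the four left-vertices or across the two endpoints of each right-vertex. The rank-homogeneity reduction is what resolves this: it collapses the edge-colouring on $\binom{H}{k}$ to the single length-$k$ vector $\vec c$, and then there is enough freedom in placing padding elements to steer every left endpoint onto a rank whose colour lies in the majority class. The only other thing to be careful about — but it is routine — is ensuring $H$ is large enough to host four spread-out left-vertices together with all the padding sets.
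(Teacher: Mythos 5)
Your proposal is correct and takes essentially the same route as the paper: apply the hypergraph Ramsey theorem to $\binom{[n]}{3}$ (with $k=3$), then choose spread-out left-vertices and pad each right-vertex with elements of the homogeneous set lying outside the chosen left-vertex set, so that induced-ness is automatic and the two designated rank positions force all twelve edges to be red. The only (harmless) difference is that you homogenize the full colour vector in $[2]^3$ and apply pigeonhole afterwards, whereas the paper pigeonholes within each triple first and then homogenizes the resulting $2\binom{3}{2}$-colouring.
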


\begin{proof}

Let 
$$n = R_{3, 2\binom{3}{2}}(9)=R_{3,6}(9).
 \footnote{Recall that $R_{3, 2\binom{3}{2}}(9)$ is the smallest $n \in \N$ such that for all $2\binom{3}{2}$-colorings of the $3$-ary hypergraph $\left([n], \binom{[n]}{3}\right),$ there exists a homogeneous set of size 9.}$$ We will show that any 2-coloring of the edges of $\displaystyle B_{n,3} =\left([n],\binom{[n]}{3},E\right),$ where $$E= \left\{ (x,X) \in [n] \times \binom{[n]}{3} \colon x\in X \right\}$$ has an induced monochromatic $B_{4,2}$ (i.e., $k=3$). Figure~\ref{fig:Bn3} shows $B_{n,3}$.
\begin{figure}[!h]
    \centering
    \begin{tikzpicture}
        \tikzstyle{vertex1} = [circle, draw, minimum width={width("$n - 1$") + 10pt}]
        \tikzstyle{vertex2} = [circle, draw, minimum width={width("$n - 2, n - 1, n$") + 10pt}]
        \tikzstyle{vertex3} = []

        \draw node[vertex1] (1) at (0, 0) {1};
        \draw node[vertex1] (2) at (0, -2) {2};
        \draw node[vertex1] (3) at (0, -4) {3};
        \draw node[vertex1] (4) at (0, -6) {4};
        \draw node[vertex1] (5) at (0, -8.5) {$n - 3$};
        \draw node[vertex1] (6) at (0, -10.5) {$n - 2$};
        \draw node[vertex1] (7) at (0, -12.5) {$n - 1$};
        \draw node[vertex1] (8) at (0, -14.5) {$n$};

        \draw node[vertex2] (11) at (4, -1) {$1, 2, 3$};
        \draw node[vertex2] (12) at (4, -5) {$1, 2, 4$};
        \draw node[vertex2] (13) at (4, -9.5) {$n - 3, n - 2, n$};
        \draw node[vertex2] (14) at (4, -13.5) {$n - 2, n - 1, n$};

        \path (4) -- (5) node [midway, sloped] {$\dots$};
        \path (12) -- (13) node [midway, sloped] {$\dots$};
        
        \draw (1) -- (11);
        \draw (2) -- (11);
        \draw (3) -- (11);
        \draw (1) -- (12);
        \draw (2) -- (12);
        \draw (4) -- (12);
        
        \draw (5) -- (13);
        \draw (6) -- (13);
        \draw (8) -- (13);
        \draw (6) -- (14);
        \draw (7) -- (14);
        \draw (8) -- (14);
        
    \end{tikzpicture}
    \caption{$B_{n, 3}$}
    \label{fig:Bn3}
\end{figure}

Let $\displaystyle \COL: E \rightarrow [2]$ be a $2$-coloring of the edges of $B_{n, 3}$. We will define $\displaystyle \COL'\colon \binom{[n]}{3}\rightarrow \left[2\binom{3}{2}\right]$ for $\displaystyle X = \{z_1 < z_2 < z_3\} \in \binom{[n]}{3}$. In particular, since there are edges of the form $(z_1,X)$, $(z_2,X)$ and $(z_3,X)$ in $B_{n,3}$, $2$ of these edges -- say $(z_i, X)$ and $(z_j, X)$ -- have the same color $c$ under $\COL$. We define $\COL'(X)=(c,\{i,j\})$. 

Now, by the definition of $n$, there is a homogeneous set $H$ (relative to $\COL'$) of size 9 and color $(c,\{i, j\}).$ Without loss of generality, we may assume that $H=\{1,\ldots,9\}$ and that $c = \RED.$ We will now work out the case where $(i,j)=(1,3)$ and sketch the other cases later. 

\bigskip

\noindent
{\bf Case 1: $(i,j)=(1,3)$}\\\\
\noindent
In this case, for all right-vertices $X \subseteq H$ in $B_{n, 3},$ the 1st and 3rd edges coming into $X$ are $\RED$. However, we won't assume anything about the color of the 2nd edge coming into $X$. Thus, in the figures below, we will, as the Rolling Stones would do, paint the edges whose colors we won't assume anything about $\BLACK$. 

Figure~\ref{fig:B93} shows $B_{n, 3}$ restricted to the left-vertices $1,\ldots,4.$
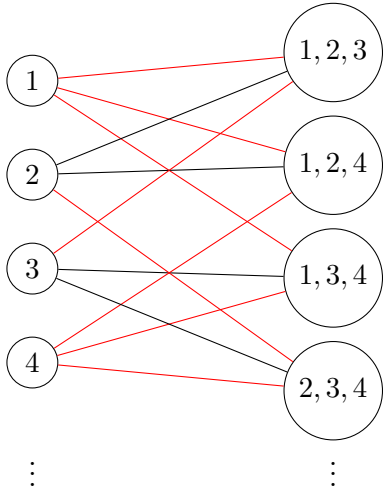
\begin{figure}[!h]
    \centering
    \begin{tikzpicture}
        \tikzstyle{vertex} = [circle, draw]

        \draw node[vertex] (1) at (0, -0.375) {1};
        \draw node[vertex] (2) at (0, -1.625) {2};
        \draw node[vertex] (3) at (0, -2.875) {3};
        \draw node[vertex] (4) at (0, -4.125) {4};

        \draw node[vertex] (11) at (4, 0) {$1, 2, 3$};
        \draw node[vertex] (12) at (4, -1.5) {$1, 2, 4$};
        \draw node[vertex] (13) at (4, -3) {$1, 3, 4$};
         \draw node[vertex] (14) at (4, -4.5) {$2, 3, 4$};

        \draw node (5) at (0, -5.5) {};
        \draw node (15) at (4, -5.5) {};

        \path (4) -- (5) node [] {$\vdots$};
        \path (14) -- (15) node [] {$\vdots$};
        
        \draw[red, -] (1) -- (11);
        \draw[red, -] (1) -- (12);
        \draw[red, -] (1) -- (13);
        \draw[-] (2) -- (11);
        \draw[-] (2) -- (12);
        \draw[red, -] (3) -- (11);
        \draw[-] (3) -- (13);
        \draw[red, -] (4) -- (12);
        \draw[red, -] (4) -- (13);
        \draw[red, -] (2) -- (14);
        \draw[-] (3) -- (14);
        \draw[red, -] (4) -- (14);
    \end{tikzpicture}
    \caption{$B_{n, 3}$ restricted to the left-vertices $1, \ldots, 4$}
    \label{fig:B93}
\end{figure}

\newpage

Now, consider the subgraph induced by left-vertices $2,4,6,8$ and the right-vertices $\{2,3,4\}$, $\{2,3,6\}$, $\{2,3,8\}$, $\{4,5,6\}$, $\{4,5,8\}$, $\{6,7,8\}$ in $B_{n, 3}$. Note that this subgraph is a monochromatic $\RED$ $B_{4, 2},$ as shown in Figure~\ref{fig:indB42-13}. Indeed, for any right-vertex $\{a<b<c\}$  in this subgraph, the middle entry $b$ is absent from the left-vertex set. We now see why we wanted a homogeneous set of size 9: We wanted a set of four left-vertices such that (1) any pair of adjacent vertices has ONE missing vertex between them (2) the top vertex has ONE missing vertex above it and (3) the bottom vertex has ONE missing vertex below it. This ONE will be what changes when we generalize. 

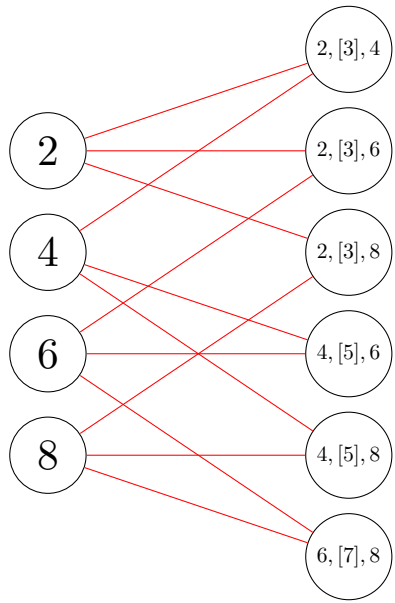
\begin{figure}[!h]
    \centering
    \begin{tikzpicture}
        \tikzstyle{vertex1} = [circle, draw, scale=1.5]
        \tikzstyle{vertex2} = [circle, draw, scale=0.75]

        \draw node[vertex1] (2) at (0, 0) {2};
        \draw node[vertex1] (4) at (0, -1.35) {4};
        \draw node[vertex1] (6) at (0, -2.7) {6};
        \draw node[vertex1] (8) at (0, -4.05) {8};
        \draw node[vertex2] (11) at (4, 1.35) {$2, [3], 4$};
        \draw node[vertex2] (12) at (4, 0) {$2, [3], 6$};
        \draw node[vertex2] (13) at (4, -1.35) {$2, [3], 8$};
        \draw node[vertex2] (14) at (4, -2.7) {$4, [5], 6$};
        \draw node[vertex2] (15) at (4, -4.05) {$4, [5], 8$};
        \draw node[vertex2] (16) at (4, -5.4) {$6, [7], 8$};

        \draw[red, -] (2) -- (11);
        \draw[red, -] (2) -- (12);
        \draw[red, -] (2) -- (13);
        \draw[red, -] (4) -- (11);
        \draw[red, -] (4) -- (14);
        \draw[red, -] (4) -- (15);
        \draw[red, -] (6) -- (12);
        \draw[red, -] (6) -- (14);
        \draw[red, -] (6) -- (16);
        \draw[red, -] (8) -- (13);
        \draw[red, -] (8) -- (15);
        \draw[red, -] (8) -- (16);
    \end{tikzpicture}
    \caption{the induced $B_{4, 2}$ in the $(i,j)=(1,3)$ case}
    \label{fig:indB42-13}
\end{figure}
\ \\
\noindent
{\bf Case 2: $(i,j)=(1,2)$}\\\\ 
\noindent
See Figure~\ref{fig:indB42-12}.\\\\
\noindent
{\bf Case 3: $(i,j)=(2,3)$}\\\\
See Figure~\ref{fig:indB42-23}.
\end{proof}
\begin{figure}[!h]
    \centering
    \begin{tikzpicture}
        \tikzstyle{vertex1} = [circle, draw, scale=1.5]
        \tikzstyle{vertex2} = [circle, draw, scale=0.75]

        \draw node[vertex1] (2) at (0, 0) {2};
        \draw node[vertex1] (4) at (0, -1.35) {4};
        \draw node[vertex1] (6) at (0, -2.7) {6};
        \draw node[vertex1] (8) at (0, -4.05) {8};
        \draw node[vertex2] (11) at (4, 1.35) {$2, 4, [5]$};
        \draw node[vertex2] (12) at (4, 0) {$2, 6, [7]$};
        \draw node[vertex2] (13) at (4, -1.35) {$2, 8, [9]$};
        \draw node[vertex2] (14) at (4, -2.7) {$4, 6, [7]$};
        \draw node[vertex2] (15) at (4, -4.05) {$4, 8, [9]$};
        \draw node[vertex2] (16) at (4, -5.4) {$6, 8, [9]$};

        \draw[red, -] (2) -- (11);
        \draw[red, -] (2) -- (12);
        \draw[red, -] (2) -- (13);
        \draw[red, -] (4) -- (11);
        \draw[red, -] (4) -- (14);
        \draw[red, -] (4) -- (15);
        \draw[red, -] (6) -- (12);
        \draw[red, -] (6) -- (14);
        \draw[red, -] (6) -- (16);
        \draw[red, -] (8) -- (13);
        \draw[red, -] (8) -- (15);
        \draw[red, -] (8) -- (16);
    \end{tikzpicture}
    \caption{the induced $B_{4, 2}$ in the $(i,j)=(1,2)$ case}
    \label{fig:indB42-12}
\end{figure}
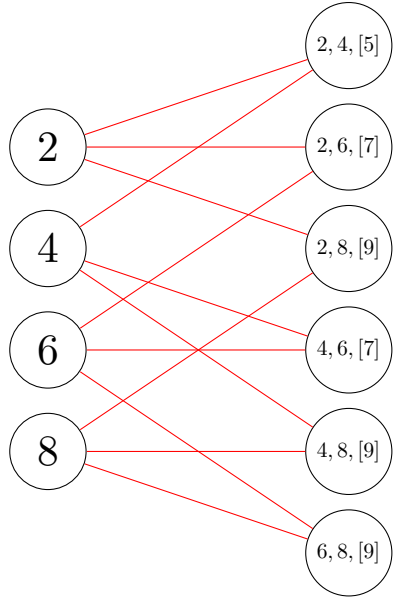

\begin{figure}[!h]
    \centering
    \begin{tikzpicture}
        \tikzstyle{vertex1} = [circle, draw, scale=1.5]
        \tikzstyle{vertex2} = [circle, draw, scale=0.75]

        \draw node[vertex1] (2) at (0, 0) {2};
        \draw node[vertex1] (4) at (0, -1.35) {4};
        \draw node[vertex1] (6) at (0, -2.7) {6};
        \draw node[vertex1] (8) at (0, -4.05) {8};
        \draw node[vertex2] (11) at (4, 1.35) {$[1],2, 4$};
        \draw node[vertex2] (12) at (4, 0) {$[1], 2, 6$};
        \draw node[vertex2] (13) at (4, -1.35) {$[1], 2, 8$};
        \draw node[vertex2] (14) at (4, -2.7) {$[3], 4, 6$};
        \draw node[vertex2] (15) at (4, -4.05) {$[3], 4, 8$};
        \draw node[vertex2] (16) at (4, -5.4) {$[5], 6, 8$};

        \draw[red, -] (2) -- (11);
        \draw[red, -] (2) -- (12);
        \draw[red, -] (2) -- (13);
        \draw[red, -] (4) -- (11);
        \draw[red, -] (4) -- (14);
        \draw[red, -] (4) -- (15);
        \draw[red, -] (6) -- (12);
        \draw[red, -] (6) -- (14);
        \draw[red, -] (6) -- (16);
        \draw[red, -] (8) -- (13);
        \draw[red, -] (8) -- (15);
        \draw[red, -] (8) -- (16);
    \end{tikzpicture}
    \caption{the induced $B_{4, 2}$ in the $(i,j)=(2,3)$ case}
    \label{fig:indB42-23}
\end{figure}
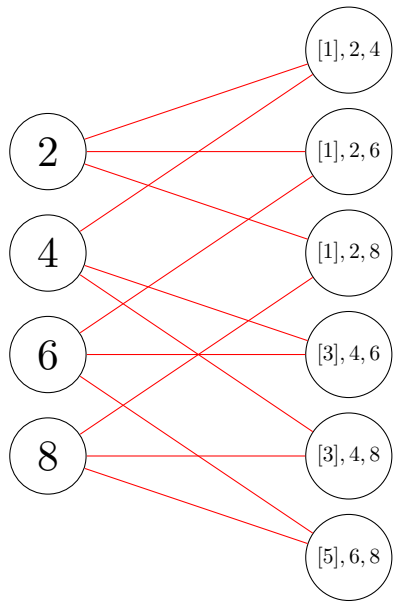

\section{The Induced $B_{n,k}$ Ramsey Theorem} 

We now generalize Theorem~\ref{th:IndB42} to apply to $B_{n,k}$ for any $n, k \in \N$. Most of the ideas used were already in the proof of Theorem~\ref{th:IndB42}.

\begin{theorem}\label{th:IndBab} 
   Let $a, b \in \N$ with $b \le a.$ 
   Then, there exists $n,k\in\N$ such that for all 2-colorings of the edges of  $B_{n,k}$, there exists an induced monochromatic $B_{a,b}.$
\end{theorem}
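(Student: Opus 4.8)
The plan is to run the argument of Theorem~\ref{th:IndB42} in full generality, with the constants $3$, $\binom{3}{2}$ and $9$ appearing there replaced by their natural analogues. Take $k=2b-1$, take $N=a+(a+1)(k-b)=a+(a+1)(b-1)$, and set
$$n=R_{k,\,2\binom{k}{b}}(N).$$
(For $a=4$, $b=2$ this recovers $k=3$, $2\binom{3}{2}=6$ colors, and $N=9$, exactly the values used in the proof of Theorem~\ref{th:IndB42}.) I claim that every $2$-coloring $\COL$ of the edges of $B_{n,k}$ has an induced monochromatic $B_{a,b}$, and I would prove this in three moves: build an auxiliary hypergraph coloring, extract a homogeneous set, and decode a copy of $B_{a,b}$ from it.

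First, define $\COL'\colon\binom{[n]}{k}\to[2]\times\binom{[k]}{b}$ as follows. Given $X=\{z_1<\cdots<z_k\}$, the $k=2b-1$ edges $(z_1,X),\dots,(z_k,X)$ use $2$ colors, so exactly one color $c$ occurs on at least $b$ of them (not both, since $2b>k$); let $P\subseteq[k]$ be the set of the $b$ smallest positions $p$ with $\COL(z_p,X)=c$, and put $\COL'(X)=(c,P)$. By the choice of $n$ (Theorem~\ref{th:aary} and Notation~\ref{no:Rack}) there is a homogeneous set $H$ with $|H|=N$; after relabelling, assume $H=\{1,\dots,N\}$ and that its common $\COL'$-value is $(\RED,P)$ with $P=\{p_1<\cdots<p_b\}$. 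The payoff, just as in Theorem~\ref{th:IndB42}, is that for every $k$-subset $X$ of $H$ and every $m\in[b]$, the edge joining $X$ to its $p_m$-th smallest element is $\RED$, while the remaining $k-b$ edges into $X$ carry no guarantee.

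Second, I would choose the left side of the copy of $B_{a,b}$ together with a right-vertex for each $b$-subset of it. Because $N=a+(a+1)(k-b)$, I can pick $\ell_1<\cdots<\ell_a$ in $H$ so that $H\setminus\{\ell_1,\dots,\ell_a\}$ contains at least $k-b$ elements below $\ell_1$, at least $k-b$ elements strictly between $\ell_i$ and $\ell_{i+1}$ for each $i$, and at least $k-b$ elements above $\ell_a$ (the $a+1$ ``reserved gaps''). For each $S=\{\lambda_1<\cdots<\lambda_b\}\in\binom{\{\ell_1,\dots,\ell_a\}}{b}$, build a $k$-subset $X_S$ of $H$ by placing $\lambda_m$ in sorted position $p_m$ for each $m$ and filling the $p_1-1$ positions before $p_1$, the $p_{m+1}-p_m-1$ positions strictly between $p_m$ and $p_{m+1}$, and the $k-p_b$ positions after $p_b$ with ``filler'' elements of $H\setminus\{\ell_1,\dots,\ell_a\}$ drawn from the corresponding interval of $H$ (below $\lambda_1$, strictly between $\lambda_m$ and $\lambda_{m+1}$, or above $\lambda_b$, respectively). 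Each such interval contains at least one reserved gap, and each of these counts is at most $k-b$, so enough fillers always exist; a one-line count of how many elements of $X_S$ lie below $\lambda_m$ confirms that $\lambda_m$ really occupies position $p_m$. In particular $X_S\cap\{\ell_1,\dots,\ell_a\}=S$, so the $X_S$ are pairwise distinct $k$-subsets of $H$.

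Third, consider the subgraph of $B_{n,k}$ induced by the left-vertices $\{\ell_1,\dots,\ell_a\}$ and the right-vertices $\{X_S:S\in\binom{\{\ell_1,\dots,\ell_a\}}{b}\}$. Since $X_S$ meets $\{\ell_1,\dots,\ell_a\}$ in exactly $S$ (the fillers are not among the $\ell_i$), vertex $\ell_i$ is adjacent to $X_S$ iff $\ell_i\in S$; identifying $\ell_i$ with $i$ and $X_S$ with $S$ exhibits this induced subgraph as $B_{a,b}$. Every edge $(\lambda_m,X_S)$ of it joins $X_S$ to its $p_m$-th smallest element with $p_m\in P$, hence is $\RED$. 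So $B_{n,k}$ contains an induced monochromatic $B_{a,b}$. The delicate point, and the one I expect to cost the most care, is the bookkeeping of the second move: the fillers must never coincide with a chosen $\ell_i$ (otherwise a spurious edge would contaminate the induced subgraph), and the construction must survive the worst position set $P$ — for instance $P=\{1,\dots,b\}$, which forces all $k-b$ fillers of every $X_S$ below its $b$ real elements — which is exactly why each reserved gap must hold $k-b$ elements and no fewer. The three cases in the proof of Theorem~\ref{th:IndB42} are precisely the $a=4$, $b=2$, $k=3$ instances of this single argument, one for each $P\in\binom{[3]}{2}$.
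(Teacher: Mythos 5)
Your proposal is correct and takes essentially the same approach as the paper: the same choice $k=2b-1$, the same auxiliary $2\binom{2b-1}{b}$-coloring recording a majority color and its $b$ positions, and the same homogeneous-set size, since your $N=a+(a+1)(b-1)=ab+b-1$ is exactly the paper's $s$, with the same evenly spaced left-vertices. The only difference is that you explicitly carry out the construction and verification of the right-vertices $X_S$ (the filler bookkeeping), which the paper leaves to the reader.
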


\begin{proof}{}
Let $s \in \N$ be a constant to be determined later, and let
$$n = R_{2b-1, 2\binom{2b-1}{b}}(s).\footnote{Recall that $R_{2b-1, 2\binom{2b-1}{b}}(s)$ is the smallest $n \in \N$ such that for all $2\binom{2b-1}{b}$-colorings of the $(2b-1)$-ary hypergraph $\left([n], \binom{[n]}{2b-1}\right)$, there exists a homogeneous set of size $s$.}$$ We will show that any 2-coloring of the edges of $\displaystyle B_{n,2b-1}=\left([n],\binom{[n]}{2b-1},E\right),$ where $$E= \left\{ (x,X) \in [n] \times \binom{[n]}{2b-1} \colon x\in X \right\}$$ has an induced monochromatic $B_{a,b}$ (i.e., $k=2b-1$). 

Let $\displaystyle \COL: E \rightarrow [2]$ be a $2$-coloring of the edges of $B_{n, 2b-1}$. We will define $\displaystyle\COL'\colon \binom{[n]}{2b-1}\rightarrow \left[2\binom{2b-1}{b}\right]$ for $\displaystyle X=\{z_1 < \cdots < z_{2b-1} \} \in \binom{[n]}{2b-1}$. In particular, since there are edges of the form $(z_1,X)$, $\cdots$, $(z_{2b-1},X)$ in $B_{n,2b-1}$, $b$ of these edges -- say $(z_{i_1}, X), \cdots, (z_{i_b}, X)$ -- have the same color $c$. We define
$\COL'(X)=(c,\{i_1,\ldots,i_b\})$.

Now, by the definition of $n,$ there exists a homogeneous set $H$ (relative to $\COL'$) of size $s$ and color $(c,\{i_1,\ldots,i_b\})$. Without loss of generality, we may assume that $H=\{1,\ldots,s\}$ and that $c = \RED$. Thus, for all right-vertices $X \subseteq H$ in $B_{n, 2b - 1}$, the $i_1$th, $\ldots$, $i_b$th edges coming into $X$ are $\RED$.

We will now choose $s$. Recall that in the proof of 
Theorem~\ref{th:IndB42}, we wanted an induced $B_{4,2}$ and chose $s=9$.
We then took the 4 (4 was the first number in $B_{4,2}$) left-vertices $2,4,6,8$ because there was ONE (ONE was one less than 2, the second number in $B_{4,2}$) invisible vertex between any pair of adjacent vertices, ONE invisible vertex above the top vertex, and ONE invisible vertex below the bottom vertex. Hence, we choose $s=ab+b-1$ so that we can take the left-vertices $b, 2b, \ldots, ab$ and have $b-1$ vertices after $ab$. We now leave the rest of the proof to the reader. 
\end{proof}

\
\section{The Induced Bipartite Ramsey Theorem For a Particular Graph} 

By Theorem~\ref{th:BipRamThm}, there is an Induced Ramsey Theorem for every bipartite graph {\it of the form $K_{a,b}$}. 
By Theorem~\ref{th:IndBab}, there is an Induced Ramsey Theorem for every bipartite graph {\it of the form $B_{a,b}$}. What about other bipartite graphs? In this section, we will consider the case of a small bipartite graph, the one in Figure~\ref{fig:simpleG}. Since it is just an example, we will call it a proposition instead of a theorem.

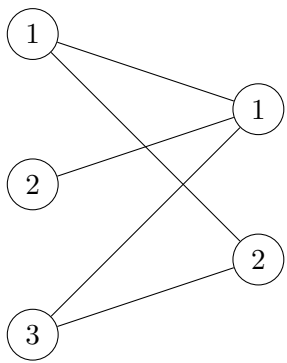
\begin{figure}[!h]
    \centering
    \begin{tikzpicture}
        \tikzstyle{vertex1} = [circle, draw]

        \draw node[vertex1] (1) at (0, 0) {$1$};
        \draw node[vertex1] (2) at (0, -2) {$2$};
        \draw node[vertex1] (3) at (0, -4) {$3$};

        \draw node[vertex1] (11) at (3, -1) {$1$};
        \draw node[vertex1] (12) at (3, -3) {$2$};
        
        \draw[-] (1) -- (11);
        \draw[-] (1) -- (12);
        \draw[-] (2) -- (11);
        \draw[-] (3) -- (11);
        \draw[-] (3) -- (12);
    \end{tikzpicture}
    \caption{A Small Bipartite Graph}
    \label{fig:simpleG}
\end{figure}

\begin{proposition}\label{pr:simpleG} 
Let $G$ be the bipartite graph in Figure~\ref{fig:simpleG}.
Then, there exists $n,k\in\N$ such that for all 2-colorings of the edges of 
$B_{n,k}$, there exists an induced monochromatic $G$. 
\end{proposition}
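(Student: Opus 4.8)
The plan is to deduce Proposition~\ref{pr:simpleG} from Theorem~\ref{th:IndBab}, using the observation that $G$ --- although not itself of the form $B_{a,b}$ --- is an \emph{induced subgraph} of some $B_{a,b}$. The general reason this should work: given any bipartite graph, if we pad the neighborhood of each right-vertex with fresh dummy left-vertices until all neighborhoods have a common size, the resulting incidence structure sits inside some $B_{a,b}$, and the induced subgraph on the original vertices is unchanged.

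For the particular $G$ of Figure~\ref{fig:simpleG}, the first step is to verify that $G$ is an induced subgraph of $B_{4,3} = \left([4], \binom{[4]}{3}, E\right)$. I would take the left-vertices $1, 2, 3$ and the right-vertices $\{1,2,3\}$ and $\{1,3,4\}$; the incidences among them are exactly $1 \in \{1,2,3\}$, $1 \in \{1,3,4\}$, $2 \in \{1,2,3\}$, $3 \in \{1,2,3\}$, $3 \in \{1,3,4\}$, so the induced bipartite subgraph on these five vertices is isomorphic to $G$ (the degree-$3$ right-vertex of $G$ corresponding to $\{1,2,3\}$ and the degree-$2$ one to $\{1,3,4\}$). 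Since $3 \le 4$, Theorem~\ref{th:IndBab} applies to $B_{4,3}$.

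Next I would apply Theorem~\ref{th:IndBab} with $a = 4$, $b = 3$ to get $n, k \in \N$ such that every $2$-coloring of the edges of $B_{n,k}$ contains an induced monochromatic $B_{4,3}$. To conclude, I would locate inside that induced monochromatic $B_{4,3}$ the copy of $G$ found above: it is monochromatic because it is a subgraph of a monochromatic graph, and it is an induced subgraph of $B_{n,k}$ because an induced subgraph of an induced subgraph is again an induced subgraph (immediate from the definition, since $\binom{V''}{2} \cap E = \binom{V''}{2} \cap \left(\binom{V'}{2} \cap E\right)$ whenever $V'' \subseteq V'$).

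There is really no hard step; the only thing one needs to spot is that $G$ is an induced subgraph of some $B_{a,b}$. In fact the padding argument mentioned in the first paragraph works for an arbitrary bipartite graph, so Theorem~\ref{th:IndBab} already yields an Induced Bipartite Ramsey Theorem for \emph{every} bipartite graph; Proposition~\ref{pr:simpleG} is just the instance for this particular $G$.
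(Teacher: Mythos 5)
Your proposal is correct and follows essentially the same route as the paper: invoke Theorem~\ref{th:IndBab} for a $B_{a,b}$ that contains $G$ as an induced subgraph, then use that an induced subgraph of an induced monochromatic copy is itself induced and monochromatic. The only difference is the choice of host graph --- you embed $G$ into $B_{4,3}$ (via the right-vertices $\{1,2,3\}$ and $\{1,3,4\}$), which is a bit more economical than the paper's $B_{7,4}$, but the argument is the same.
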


\begin{proof}
Let $n,k \in \N$ be such that for all 2-colorings of the edges of $B_{n,k}$, there exists an induced monochromatic $B_{7,4}$. (Note that such $n,k$ exist by Theorem~\ref{th:IndBab}.) Then, let $\displaystyle \COL: E \rightarrow [2]$ be a $2$-coloring of the edges of $B_{n, k}$, and label the left-vertices of the induced monochromatic $B_{7,4}$ as $1,2,3,1',2',3',1'',2''$. Now, consider the induced subgraph of this $B_{7,4}$ that has 
\begin{enumerate}
    \item 
    left-vertices $1,2,3$ 
    \item 
    right-vertices $\{1,2,3,1'\}$ and $\{1,3,2',1''\}$
\end{enumerate}
Figure~\ref{fig:simpleindG} shows this induced bipartite graph. Note that it is $G$. 
\end{proof}

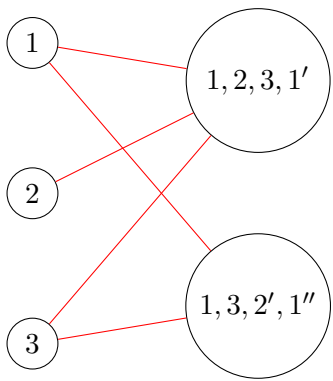
\begin{figure}[!h]
    \centering
    \begin{tikzpicture}
        \tikzstyle{vertex1} = [circle, draw]
        \tikzstyle{vertex2} = [circle, draw, minimum width={width("$1, 3, 2', 1''$") + 10pt}]

        \draw node[vertex1] (1) at (0, 0) {$1$};
        \draw node[vertex1] (2) at (0, -2) {$2$};
        \draw node[vertex1] (3) at (0, -4) {$3$};

        \draw node[vertex2] (11) at (3, -0.5) {$1, 2, 3, 1'$};
        \draw node[vertex2] (12) at (3, -3.5) {$1, 3, 2', 1''$};
        
        \draw[-, red] (1) -- (11);
        \draw[-, red] (1) -- (12);
        \draw[-, red] (2) -- (11);
        \draw[-, red] (3) -- (11);
        \draw[-, red] (3) -- (12);
    \end{tikzpicture}
    \caption{The Induced Bipartite Graph $G$}
    \label{fig:simpleindG}
\end{figure}

In this next section, we generalize Proposition~\ref{pr:simpleG} to any bipartite graph. The key to the proof of Proposition~\ref{pr:simpleG} is
that $B_{7,4}$ has an induced $G$, where $G$ is the 
bipartite graph in Figure~\ref{fig:simpleG}. We will now show that for any 
bipartite graph $G$, there exists $a,b \in \N$ such that $G$ is an induced subgraph of $B_{a,b}$. We will then prove the Induced Bipartite Ramsey Theorem, which will follow directly from this result.

\begin{lemma}\label{le:BipIndSubgraph}
Let $G=(C,D,E)$ be a bipartite graph. Then, there exists $a,b \in \N$ such that $G$ is an induced subgraph of $B_{a,b}$. 
\end{lemma}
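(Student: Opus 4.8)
The plan is to realize any bipartite graph $G=(C,D,E)$ inside some $B_{a,b}$ by encoding each right-vertex of $G$ as a suitable $b$-subset of $[a]$, mimicking the trick already used in Proposition~\ref{pr:simpleG}. Write $C=\{c_1,\ldots,c_p\}$ for the left side and $D=\{d_1,\ldots,d_q\}$ for the right side. I will take the left-vertices of $B_{a,b}$ to literally include $C$ (so that left-vertex $c_i$ of $B_{a,b}$ plays the role of $c_i$ in $G$), plus a pool of extra ``filler'' left-vertices. The task then reduces to: for each $d_j \in D$, choose a $b$-set $X_j \subseteq [a]$ such that $X_j \cap C = N_G(d_j)$ (the neighborhood of $d_j$ in $G$), while also ensuring the induced subgraph on $C \cup \{X_1,\ldots,X_q\}$ has no unwanted edges — but since $B_{a,b}$'s edges are exactly the membership relation, $X_j \cap C = N_G(d_j)$ already pins down all edges between $C$ and $X_j$ correctly, so the induced subgraph on $\{c_1,\ldots,c_p\}\cup\{X_1,\ldots,X_q\}$ is automatically $G$ — provided the $X_j$ are distinct $b$-subsets.

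So the real content is a counting/padding argument. First I would set $b = \max_j |N_G(d_j)|$ if that is positive (and handle the degenerate case $b=0$, i.e.\ $D$ has only isolated vertices, separately or by bumping $b$ up). Each $N_G(d_j)$ has size at most $b$; I pad it up to size exactly $b$ by throwing in filler left-vertices drawn from a reservoir $F$ of new vertices disjoint from $C$. To keep the $X_j$ distinct, I give each $d_j$ its own private block of filler vertices: reserve disjoint sets $F_1,\ldots,F_q$, each of size $b$, and when padding $N_G(d_j)$ use only vertices from $F_j$. Then $X_j \subseteq C \cup F_j$ and $X_{j'} \subseteq C \cup F_{j'}$ for $j \ne j'$ share no filler vertex, so if $X_j = X_{j'}$ we would need $X_j = X_{j'} \subseteq C$, forcing $N_G(d_j)=N_G(d_{j'})$ \emph{and} both to have size $b$ with no padding — this can still happen if two right-vertices of $G$ have identical neighborhoods of full size $b$. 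To rule that out entirely, I instead make each $F_j$ of size $b$ and insist that $X_j$ contains at least one vertex of $F_j$ (always pad in at least one filler, enlarging $b$ by $1$ if necessary so there is room); then $X_j$ and $X_{j'}$ are distinguished by which private filler block they meet. Finally set $a = |C| + \sum_{j=1}^q |F_j| = |C| + q\cdot b'$ where $b'$ is the adjusted size, take $B_{a,b'}$, and verify the induced subgraph on $C$ together with the $X_j$'s is exactly $G$.

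The one place that needs care — the main obstacle — is precisely the distinctness of the $X_j$'s and the possibility that $D$ contains multiple vertices with the same neighborhood (including repeated isolated vertices), since then $G$ is not determined by neighborhoods alone and a naive encoding collapses them into one right-vertex of $B_{a,b}$. The private-filler-block device above resolves this cleanly: by always mixing in a tagged filler vertex unique to $d_j$, distinct right-vertices of $G$ map to distinct $b$-subsets regardless of their $G$-neighborhoods. I would present this as: choose $b \ge 1 + \max_j |N_G(d_j)|$, allocate disjoint filler blocks $F_1,\dots,F_q$ each of size $b-1$ plus possibly more to reach size, set $X_j = N_G(d_j) \cup (\text{exactly } b - |N_G(d_j)| \text{ vertices of } F_j)$ — which uses at least one vertex of $F_j$ — and conclude. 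The verification that $B_{a,b}$ restricted to $\{c_1,\ldots,c_p,X_1,\ldots,X_q\}$ equals $G$ is then a one-line check using the definition of $B_{a,b}$ and of induced subgraph.
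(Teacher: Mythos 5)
Your construction is correct and is essentially the paper's own argument: keep the left vertices of $G$ as left vertices of $B_{a,b}$, and encode each right vertex as a $b$-set consisting of its neighborhood, a distinguishing tag, and filler padding, so that the intersection with the chosen left vertices is exactly the neighborhood. The only difference is bookkeeping: the paper uses one tag vertex $j''$ per right vertex plus a single shared padding pool $1',\ldots,c'$ (giving $a=2c+d$, $b=c+1$), whereas you give each right vertex its own private filler block, which works just as well but with a slightly larger $a$.
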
 

\begin{proof} 
Let $C= \{1,\ldots,c\}$ and $D=\{1,\ldots, d\}$. 
 Now, consider the graph $B_{a, b},$ where $a=2c+d$ and $b=c+1,$ and label the left-vertices as $\\1,\ldots,c, 1',\ldots,c', 1'',\ldots,d''.$ We will now show that $G$ is an induced subgraph of $B_{a,b}$ by giving the vertices that induce $G$:
\begin{enumerate}
    \item 
    The left-vertices are $1,\ldots,c$. These will correspond to 
    the left-vertices in $G,$ which are also named $1,\ldots,c$. 
    \item 
    For the right-vertices, we will form a $b$-set to correspond to each right-vertex $1\le j \le d$ in $G.$ We start by adding $\{z_1,\ldots,z_L\},$ the neighbors of $j$ in $G,$ to the set. (Note that $L \le c < c + 1 = b$.) Next, we add $j''$ to the set to distinguish it from the other right-vertices in $B_{a, b}$ that contain $\{z_1,\ldots,z_L\}.$ Note the set has $L+1$ elements now. To finish, we add $1',\ldots,(b-L-1)'$ to the set to bring the size up to $b$. 
\end{enumerate}    

Clearly, the above vertices induce $G$.
\end{proof}

You can look at Figure~\ref{fig:simpleG} for a simple example of $G$ and Figure~\ref{fig:simpleindG} for the corresponding induced $G$. These were the graphs from Proposition~\ref{pr:simpleG}, but now you can see that they are special cases of Lemma~\ref{le:BipIndSubgraph}.\\

We are now ready to prove the main result of this paper.

\begin{theorem}
    Let $G$ be a bipartite graph. 
There exists $n,k\in\N$ such that for all 2-colorings of the edges of 
$B_{n,k}$, there exists an induced monochromatic $G$. 
\end{theorem}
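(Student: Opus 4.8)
The plan is to combine Lemma~\ref{le:BipIndSubgraph} with Theorem~\ref{th:IndBab} in a single short argument. By Lemma~\ref{le:BipIndSubgraph}, there exist $a,b \in \N$ with $b \le a$ such that $G$ is an induced subgraph of $B_{a,b}$; fix such $a,b$ and fix once and for all a set $W$ of left-vertices and a set $Y$ of right-vertices of $B_{a,b}$ that induce a copy of $G$. By Theorem~\ref{th:IndBab}, there exist $n,k \in \N$ such that every $2$-coloring of the edges of $B_{n,k}$ contains an induced monochromatic $B_{a,b}$. I claim these $n,k$ work for $G$ as well.

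To see this, let $\COL \colon E \to [2]$ be any $2$-coloring of the edges of $B_{n,k}$. By the choice of $n,k$, there is an induced monochromatic copy of $B_{a,b}$ inside $B_{n,k}$, say in color $\RED$. Now transport the fixed vertex sets $W$ and $Y$ through the isomorphism between $B_{a,b}$ and this induced copy: this yields left-vertices and right-vertices of $B_{n,k}$ whose induced subgraph in $B_{n,k}$ is isomorphic to $G$. The one point that needs care is ``induced-ness is inherited'': if $H$ is an induced subgraph of $B_{n,k}$ and $G$ is an induced subgraph of $H$, then $G$ is an induced subgraph of $B_{n,k}$. This is immediate from the definition, since for a vertex set $V'$ contained in the vertex set of $H$, which is contained in that of $B_{n,k}$, we have $\binom{V'}{2}\cap E(H) = \binom{V'}{2}\cap \binom{V(H)}{2}\cap E(B_{n,k}) = \binom{V'}{2}\cap E(B_{n,k})$. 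Finally, all edges of this copy of $G$ lie among the $\RED$ edges of the monochromatic $B_{a,b}$, so the copy of $G$ is monochromatic $\RED$.

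There is essentially no obstacle here: the work was already done in Lemma~\ref{le:BipIndSubgraph} (embedding an arbitrary bipartite $G$ as an induced subgraph of some $B_{a,b}$) and in Theorem~\ref{th:IndBab} (the induced Ramsey property for the graphs $B_{a,b}$). The only thing to be mildly careful about is the bookkeeping of composing two ``induced subgraph'' relations and observing that a monochromatic host graph has monochromatic induced subgraphs. I would therefore present the proof in three sentences: invoke the lemma to get $B_{a,b} \supseteq_{\text{ind}} G$, invoke Theorem~\ref{th:IndBab} to get $n,k$ forcing an induced monochromatic $B_{a,b}$, and note that the fixed induced copy of $G$ inside that $B_{a,b}$ is then an induced monochromatic $G$ inside $B_{n,k}$.
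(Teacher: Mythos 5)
Your proposal is correct and follows essentially the same route as the paper: invoke Lemma~\ref{le:BipIndSubgraph} to embed $G$ as an induced subgraph of some $B_{a,b}$, invoke Theorem~\ref{th:IndBab} to force an induced monochromatic $B_{a,b}$ in $B_{n,k}$, and compose the two induced-subgraph relations. Your explicit verification that induced-ness is transitive and that the copy of $G$ inherits monochromaticity only adds detail the paper leaves implicit.
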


\begin{proof} 
By Lemma~\ref{le:BipIndSubgraph}, there exists $a,b \in \N$ such that there exists an induced monochromatic $G$ in $B_{a, b},$ and by Theorem~\ref{th:IndBab}, there exists $n, k \in \N$ such that for all 2-colorings of the edges of $B_{n,k}$, there exists an induced monochromatic $B_{a,b}$. Now, let $\COL$ be a 2-coloring of the edges of $B_{n,k}$. Then, there is an induced monochromatic $B_{a,b}$, and since there is an induced $G$ in the $B_{a,b}$, there will be an induced monochromatic $G$ in $B_{n, k}$. 
\end{proof} 

\section{Acknowledgements} 

We presented this proof in a seminar. We thank
the people who listened to us and helped us 
polish this presentation. They are
Nik Carlson,
Ainesh Chatterjee, 
Andy Cui, 
Ilya Hajiaghayi, 
Cheng-Yuan Lee, 
Zhu Lipeng, 
John Purdy, and
Kelin Zhu.
We also thank proofreaders
Rob Brady and Soren Brown.


\begin{thebibliography}{1}

\bibitem{Diestel-2017}
R.~Diestel.
\newblock {\em Graph theory}.
\newblock Springer-Verlag, 2017.

\end{thebibliography}
\end{document}